\newcommand{\R}{\mathbb{R}}
\newcommand{\IR}{\mathbb{IR}}
\newcommand{\inum}[1]{\mathbf{#1}}
\newtheorem{theorem}{Theorem}
\newtheorem{proposition}{Proposition}
\newtheorem{corollary}{Corollary}
\newtheorem{definition}{Definition}
\newtheorem{example}{Example}
\newtheorem{algorithm}{Algorithm}
\begin{document}
\begin{frontmatter}

\title{Necessary and sufficient conditions for regularity of
interval parametric matrices}

\author[ep]{Evgenija D.\ Popova}
\ead{epopova@math.bas.bg}

\address[ep]{Institute of Mathematics and Informatics, Bulgarian Academy of Sciences \\ Acad. G.~Bonchev Str., block 8, 1113 Sofia, Bulgaria.}

\begin{abstract}
Matrix regularity is a key to various problems in applied
mathematics. The sufficient conditions, usually used for checking
regularity of interval parametric matrices, fail in case of large
parameter intervals. We present necessary and sufficient conditions
for regularity of interval parametric matrices in terms of boundary
parametric hypersurfaces, parametric solution sets, determinants,
real spectral radiuses. The initial $n$-dimensional problem
involving $K$ interval parameters is replaced by numerous problems
involving $1\leq t\leq\min\{n-1,K\}$ interval parameters, in
particular $t=1$ is most attractive. The advantages of the proposed
methodology are discussed along with its application for finding the
interval hull solution to interval parametric linear system and for
determining the regularity radius of an interval parametric matrix.
\end{abstract}

\begin{keyword}
interval matrix \sep dependent data \sep regularity \sep singularity
\sep necessary and sufficient conditions \sep solution enclosure
\sep radius of regularity.
\MSC 65G40 \sep 15A06 
\sep 15B99  
\end{keyword}
\end{frontmatter}

\section{Introduction}
Contrary to the nonparametric case \cite{Rohn09-40nsc}, necessary
and sufficient conditions for regularity of interval parametric
matrices have not been studied in much details. Regularity of the
latter matrices has been studied mainly via some sufficient
conditions \cite{Rump94}, \cite{Popova04}, \cite{Popova19} as part
of the methods for bounding the solution set of an interval
parametric linear system. Checking several properties of interval
matrices, cf. \cite{KreiLakRohnKa}, like positive definiteness,
$P$-property, stability,  can be reduced to checking regularity.
These properties have various other useful applications
\cite{Mansour}, \cite{Jaulin}, \cite{KolPatr}. Some matrix
properties are studied for interval parametric matrices. Stability
of symmetric interval matrices is investigated in
\cite{Rohn'SIMAX94}. Stability in the general case of parameter
dependencies is investigated in \cite{Kolev'CTA12}. In
\cite{Hla-PosDef} a sufficient condition for checking strong
positive definiteness of an interval parametric matrix employs
regularity of the matrix. In all works, regularity of an interval
parametric matrix is estimated by a sufficient condition which may
fail in case of large parameter intervals.

In \cite{Rohn09-40nsc} J. Rohn mentions that ``{\em regularity of
interval matrices is worth further study}''. This is done in the
present work where we present some necessary and sufficient
conditions for regularity of interval parametric matrices. This
property is formulated in several equivalent forms: boundary
parametric hypersurfaces, parametric solution sets, determinants,
real spectral radiuses. A key approach to the presented conditions
is a transformation of the initial problem depending on $K$ interval
parameters into a set of $K2^{K-1}$ problems  depending on only one
interval parameter. This transformation is  based on a set of
one-parameter hypersurfaces, which contains the boundary of the
solution set of an interval parametric linear system.

The article starts by a Notation section and some basic definitions
for interval parametric matrices. The methodology and the necessary
and sufficient conditions are presented in Section \ref{nsc}.
Sections \ref{Hull} and \ref{regRad} discuss important applications
of the regularity: (i) computing the exact interval hull of a
parametric united solution set; (ii) radius of regularity of an
interval parametric matrix. Numerical examples illustrate the
methodology and its applications. The article ends by a Conclusion
section which discusses the advantages of the presented necessary
and sufficient conditions.

\section{Notation}
Denote by $\R^{m\times n}$  the set of real $m\times n$ matrices.
Vectors are considered as one-column matrices. The inequalities are
understood componentwise. The identity matrix of appropriate
dimension is denoted by $I$. The componentwise Hadamard product is
denoted by $\circ$. The vector of all ones is $e=(1,\ldots,1)^\top$.
The $n$-dimensional discrete cube $Q_n := \{y\in\R^n \mid |y|=e\}$
is the set of all $\pm 1$-vectors in $\R^n$ and its cardinality is
$2^n$. For each $y\in\R^n$ we denote the diagonal matrix with
diagonal vector $y$ and zero off-diagonal elements by
$$ D_y = {\rm diag}(y_1,\ldots,y_n).$$
We denote the determinant of a matrix $A\in\R^{n\times n}$ by
$\det(A)$. The spectral radius of a matrix $A\in\R^{n\times n}$ is
denoted by $\rho(A)$. The maximal magnitude real spectral radius
(eigenvalue) of $A$ is denoted by
$$\rho_0(A) :=
\max\{|\lambda| \mid Ax=\lambda x \text{ for some } x\neq 0, \lambda
\text{ real}\}$$ and we set $\rho_0(A)=0$ if  $A$ has no real
eigenvalue.

A real compact interval is $\inum{a} = [a^-, a^+] := \{a\in\R \mid
a^-\leq a\leq a^+\}$ and $\IR^{m\times n}$ denotes the set of
interval $m\times n$ matrices. For $\inum{a} = [a^-, a^+]$, define
its mid-point $\check{a}:= (a^- + a^+)/2$,  the radius $\hat{a} :=
(a^+ - a^-)/2$.  These functions are applied to interval vectors and
matrices componentwise.

Let $A(p)$ be a real $m\times n$ parametric matrix whose elements
$a_{ij}(p)$, $i=1,\ldots,m$, $j=1,\ldots,n$, are given functions of
a number of real parameters $p=(p_1,\ldots,p_K)^\top$.

\begin{definition}\label{def1}
For given real functional dependencies
$A(p)=\left(a_{ij}(p)\right)$, $i=1,\ldots,m$, $j=1,\ldots,n$, and a
given interval vector $\inum{p}\in\IR^K$, such that $\hat{p}_k
> 0$ for each $1\leq k\leq K$, the following set of real matrices
\begin{equation}\label{pMatL}
\left\{A(p), \inum{p}\right\} := \left\{A(p) \mid \exists
p\in\inum{p}\right\}
\end{equation}
is called an $m\times n$  {\em interval parametric matrix}.
\end{definition}
An interval parametric matrix is not an interval matrix and
$\left\{A(p), \inum{p}\right\}$ is a short notation for a family of
real matrices specified by the functional dependencies $A(p)$ and
the parameter intervals $\inum{p}$. The couple $A(p)$,
$p\in\inum{p}$, will be used as an equivalent notation for an
interval parametric matrix.

In this work we consider parameter dependencies defined by
affine-linear functions. Namely,
\begin{equation}\label{affForm}
A(p) = A_0 +\sum_{k=1}^K p_kA_k, \qquad
p\in\inum{p}\in\IR^K,
\end{equation}
with prescribed numerical matrices $A_k$, $k=0,\ldots,K$ and the
parameters $p=(p_1,\ldots$, $p_K)^\top$ are considered to be
uncertain and varying within given non-degenerate\footnote{An
interval $\inum{a}=[a^-, a^+]$ is degenerate if $a^-=a^+$.}
intervals $\inum{p}=(\inum{p}_1,\ldots,\inum{p}_K)^\top$. Nonlinear
dependencies between interval valued parameters in interval
parametric matrices are usually linearized to the form
(\ref{affForm}) and methods for the latter are applied.

Nonparametric interval matrices $\inum{A}\in\IR^{m\times n}$ can be
considered as a special class of parametric interval matrices.
Namely, $\inum{A}\in\IR^{m\times n}$ can be considered as a
parametric interval matrix involving $m\times n$ interval parameters
$a_{ij}\in\inum{a}_{ij}$, $1\leq i\leq m$, $1\leq j\leq n$. Thus,
the following Definition \ref{defReg} comprises both the parametric
and nonparametric interval matrices. In this article we consider
square parametric matrices.

\begin{definition}\label{defReg}
A square interval parametric  matrix $A(p)$, $p\in\inum{p}$, is
called {\em regular} if $A(p)$ is regular for each $p\in\inum{p}$.
\end{definition}
$\{A(p), \inum{p}\}$ is said singular if Definition \ref{defReg} is
not satisfied, i.e., if $A(p)$ is singular for some $p\in\inum{p}$.

\section{Necessary and sufficient conditions for regularity}  \label{nsc}
Let ${\cal K}=\{1,\ldots,K\}$. Define ${\cal K}(m)$ as the set of
all possible subsets of ${\cal K}$ having $m=\min\{n-1,K\}$ elements
$$
{\cal K}(m):=\{q=\{i_1,\ldots,i_m\} \mid q\subseteq{\cal K}\}.
$$
For a vector $p=(p_1,\ldots,p_K)^\top\in\R^K$ and a
$q=\{i_1,\ldots,i_m\}$, $m<K$, define $\tilde{q}:= {\cal K}\setminus
q$ and two vectors $p_q\in\R^m$, $p_{\tilde{q}}\in\R^{K-m}$ by
\begin{eqnarray*}
p_q &:=& (p_{i_1},\ldots, p_{i_m})^\top\\
p_{\tilde{q}} &:=& (p_{i_{m+1}},\ldots, p_{i_K})^\top.
\end{eqnarray*}

For completeness of the exposition, we first present \cite[Theorem
4.2]{PopKraBIT08}, proven here under a more general condition. Let
$$ \Sigma\left(A(p),
b(p), \inum{p}\right) := \left\{x\in\R^n\mid A(p)x=b(p) \text{ for
some } p\in\inum{p}\right\}
$$
be the parametric united solution set of an interval parametric
system $A(p)x=b(p)$, $p\in\inum{p}\in\IR^K$, involving affine-linear
dependencies, and $\partial \Sigma\left(A(p), b(p), \inum{p}\right)$
denote the boundary of the solution set.
\begin{theorem}\label{BIT08}
If $A(\check{p})$ is nonsingular, then
$$\partial\Sigma\left(A(p),
b(p), \inum{p}\right)\subseteq \bigcup_{q\in{\cal
K}(m)}\bigcup_{\varepsilon\in Q_{K-m}}
\left.x\left(p_q,\varepsilon\circ\hat{p}_{\tilde{q}}\right)\right|_{[-\hat{p}_q,
\hat{p}_q]}\subseteq \Sigma\left(A(p), b, \inum{p}\right),
$$
where the parametric hypersurfaces
$$x\left(p_q,\varepsilon\circ\hat{p}_{\tilde{q}}\right) = 
\left(A(\check{p})+\sum_{i\in q}p_iA_i + \sum_{j\in {\cal
K}\setminus
q}\varepsilon_j\hat{p}_jA_j\right)^{-1}\left(b(\check{p})+\sum_{i\in
q}p_ib_i + \sum_{j\in {\cal K}\setminus
q}\varepsilon_j\hat{p}_jb_j\right)
$$
are restricted to the interval vector
$[-\hat{p}_q, \hat{p}_q]$.
\end{theorem}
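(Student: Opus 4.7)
The plan is to prove the two inclusions separately: the right inclusion $\bigcup\subseteq\Sigma$ should follow by directly unpacking definitions, while the left inclusion $\partial\Sigma\subseteq\bigcup$ is the substantive content and will reduce to a rank plus convex-polytope analysis.

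For the right inclusion I would fix $q\in{\cal K}(m)$, $\varepsilon\in Q_{K-m}$, and $p_q\in[-\hat{p}_q,\hat{p}_q]$ in the domain where the displayed inverse exists, set $\tilde{p}_i:=\check{p}_i+(p_q)_i$ for $i\in q$ and $\tilde{p}_j:=\check{p}_j+\varepsilon_j\hat{p}_j$ for $j\in\tilde{q}$, and observe that $\tilde{p}\in\inum{p}$ with $A(\tilde{p})$ nonsingular; the identity $x(p_q,\varepsilon\circ\hat{p}_{\tilde{q}})=A(\tilde{p})^{-1}b(\tilde{p})$ then places the point in $\Sigma$.

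For the left inclusion I would fix $x^*\in\partial\Sigma$ together with a corresponding $p^*\in\inum{p}$ satisfying $A(p^*)x^*=b(p^*)$, and introduce the affine preimage set
\begin{equation*}
P^*:=\left\{p\in\R^K:(A_0x^*-b_0)+\sum_{k=1}^K p_k(A_kx^*-b_k)=0\right\},
\end{equation*}
noting that $P^*\cap\inum{p}$ is a non-empty convex polytope. Letting $r$ denote the rank of $M:=[A_1x^*-b_1,\ldots,A_Kx^*-b_K]$, the codimension of $P^*$ equals $r$, and the local smooth inverse $\phi(p):=A(p)^{-1}b(p)$ (well-defined on a neighborhood of $\check{p}$) has Jacobian $D\phi(p^*)=A(p^*)^{-1}M$ also of rank $r$. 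Applying the open mapping property of $\phi$ at an interior preimage excludes $r=n$ unless $P^*\cap\inum{p}\subseteq\partial\inum{p}$, leaving either (i) $r\leq n-1=m$, or (ii) $r=n$ with $P^*\cap\inum{p}\subseteq\partial\inum{p}$. In case (i) any vertex $p^{**}$ of the at-most-$(K-r)$-dimensional polytope $P^*\cap\inum{p}$ has at least $K-r\geq K-m$ coordinates at interval endpoints. In case (ii) I would invoke the combinatorial fact that a convex subset of $\partial\inum{p}$ must lie in a single facet $\{p_{k_0}=p_{k_0}^{\pm}\}$ (since a line segment contained in $\partial\inum{p}$ is forced to have some coordinate constantly at an endpoint, and this propagates to higher-dimensional convex hulls), so a vertex of $P^*\cap\inum{p}$ inside that facet contributes $K-n$ further endpoint coordinates on top of the facet one, again yielding $K-m=K-n+1$ endpoints in total. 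Finally I would pick $q\in{\cal K}(m)$ to contain all coordinates of $p^{**}$ that remain interior to their intervals and set $\varepsilon_j:=\mathrm{sign}(p^{**}_j-\check{p}_j)$ for $j\in\tilde{q}$, so that $x^*=x((p^{**}_i-\check{p}_i)_{i\in q},\varepsilon\circ\hat{p}_{\tilde{q}})$ sits on the corresponding one-parameter hypersurface.

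The hardest step will be case (ii): dimension counting alone leaves one endpoint coordinate short of the required $K-m$, and the missing one must be recovered from the combinatorial geometry of how a convex piece of the $x^*$-preimage can lie inside the boundary of a box. A secondary subtlety, arising because the hypothesis only asserts $A(\check{p})$ nonsingular rather than $A(p)$ nonsingular on all of $\inum{p}$, is that preimages $p^*$ at which $A(p^*)$ happens to be singular will have to be absorbed by a continuity/limiting argument using nearby regular parameter values around $\check{p}$.
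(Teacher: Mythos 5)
Your route is necessarily different from the paper's, because the paper gives essentially no self-contained argument: it only observes that nonsingularity of $A(\check{p})$ makes the symbolic inverse $x(p)=A(p)^{-1}b(p)$ well defined as a rational function of $p$ (Cramer's rule) and then refers to Theorems 4.1--4.2 of the cited work of Popova and Kr\"amer for the rest. Measured against the statement itself, your skeleton for the case where every $A(p)$, $p\in\inum{p}$, is nonsingular is sound: the right inclusion is pure substitution, and for the left inclusion the rank $r$ of $M=[A_1x^*-b_1,\ldots,A_Kx^*-b_K]$, the open-mapping exclusion of $r=n$ at an interior preimage, and vertex counting in the polytope $P^*\cap\inum{p}$ do yield a parameter vector with at least $K-m$ coordinates at endpoints. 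One local repair is needed in case (ii): ``one facet plus $K-n$ further endpoint coordinates'' only follows if the affine hull of $P^*$ lies inside the facet's hyperplane; otherwise the facet constraint is itself one of the $K-n$ constraints needed to cut $P^*$ down to a point, and you end up one endpoint short. The correct argument is direct: if a vertex had only $K-n$ endpoint coordinates, the corresponding coordinate functionals would restrict to a basis of the dual of the direction space of $P^*$, so one could move from the vertex into the interior of the box while remaining in $P^*$, contradicting $P^*\cap\inum{p}\subseteq\partial\inum{p}$.

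The genuine gap is the one you label ``secondary'': the handling of singular parameter values, which is exactly the added generality this theorem claims over the cited reference (only $A(\check{p})$ nonsingular, not all $A(p)$). Your concluding step identifies $x^*$ with a point of the hypersurface via $x^*=A(p^{**})^{-1}b(p^{**})$, and your open-mapping step differentiates $\phi(p)=A(p)^{-1}b(p)$ at $p^*$; both collapse if $\det A(\cdot)$ vanishes at the chosen vertex or on all of $P^*\cap\operatorname{int}\inum{p}$. At a singular $p^{**}$ the point $x^*$ is only one member of a positive-dimensional affine family of solutions of $A(p^{**})x=b(p^{**})$ and need not be a limit of $A(p)^{-1}b(p)$ along the one-parameter family, so ``a continuity/limiting argument using nearby regular values'' is not a routine patch. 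Worse, when singular matrices are present $\Sigma$ need not be closed, so a point of $\partial\Sigma$ need not admit any preimage $p^*\in\inum{p}$ at all, and the opening move of your left inclusion already fails there. You would either have to restrict to the regular case (recovering only what the reference proves) or supply a genuinely new argument for boundary points arising from singular parameters; the paper itself does not spell this out either.
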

\begin{proof} Nonsingularity of
$\check{A}$ implies {\rm ker}$(A(p))=\{0\}$, where the kernel (or
null space) of a matrix $A(p)\in \R^{m\times m}$ is
$$ {\rm ker}
(A(p)):= \{x\in\R^m \mid A(p)x = 0\}.$$ Therefore the symbolic
matrix $A(p)$ is invertible and $x(p)=\left(A(p)\right)^{-1}b(p)$
has explicit representation. Then the proof goes the same way as in
\cite[Theorem 4.1 and Theorem 4.2]{PopKraBIT08}.
\end{proof}

\begin{theorem}\label{nscG0}
For  an $n\times n$ interval parametric matrix $A(p)$,
$p\in\inum{p}$, the following conditions are equivalent:
\begin{itemize}
\item[(i)] $\{A(p), \inum{p}\}$ is regular,

\item[(a)] each interval parametric matrix $\{A(p_q, \varepsilon\circ \hat{p}_{\tilde{q}}), [-\hat{p}_q, \hat{p}_q]\}$,
$q\in{\cal K}(m)$, $\varepsilon\in Q_{K-m}$, of the form
\begin{multline}\label{A(p_q)}
A(p_q, \varepsilon\circ \hat{p}_{\tilde{q}}) = \\ A(\check{p}) +
\sum_{i\in q}p_iA_i + \sum_{j\in \tilde{q}}
\varepsilon_j\hat{p}_jA_j, \quad p_q\in [-\hat{p}_q, \hat{p}_q], \;
\tilde{q} = {\cal K}\setminus q,
\end{multline}
is regular,

\item[(b)] for a vector $b(p)\in\R^n$
and for each $q\in{\cal K}(m)$, $\varepsilon\in Q_{K-m}$, defining
$A(p_q,\varepsilon\circ\hat{p}_{\tilde{q}})$ in (\ref{A(p_q)}) and
$b(p_q,\varepsilon\circ\hat{p}_{\tilde{q}})= b(\check{p}) +
\sum_{i\in q}p_ib_i + \sum_{j\in \tilde{q}}
\varepsilon_j\hat{p}_jb_j$, the solution set
\begin{multline*}
\Sigma\left(A(p_q,\varepsilon\circ\hat{p}_{\tilde{q}}),
b(p_q,\varepsilon\circ\hat{p}_{\tilde{q}}), [-\hat{p}_q,
\hat{p}_q]\right) := \\
\{A(p_q,\varepsilon\circ\hat{p}_{\tilde{q}})x=b(p_q,\varepsilon\circ\hat{p}_{\tilde{q}})
\mid \exists p_q\in [-\hat{p}_q, \hat{p}_q]\},
\end{multline*}
is bounded.
\end{itemize}
\end{theorem}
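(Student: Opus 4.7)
My plan is to establish (i) $\Leftrightarrow$ (a) and (a) $\Leftrightarrow$ (b), with the substantive step being (a) $\Rightarrow$ (i), which invokes Theorem \ref{BIT08}.

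For (i) $\Rightarrow$ (a), a direct substitution suffices: given $q \in {\cal K}(m)$, $\varepsilon \in Q_{K-m}$, and $p_q \in [-\hat{p}_q, \hat{p}_q]$, set $p_i := \check{p}_i + (p_q)_i$ for $i \in q$ and $p_j := \check{p}_j + \varepsilon_j \hat{p}_j$ for $j \in \tilde{q}$. Then $p \in \inum{p}$, and expanding (\ref{affForm}) and (\ref{A(p_q)}) yields $A(p_q, \varepsilon \circ \hat{p}_{\tilde{q}}) = A(p)$; hence each subfamily matrix lies in $\{A(p), \inum{p}\}$ and inherits its regularity.

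The equivalence (a) $\Leftrightarrow$ (b) is elementary. For (a) $\Rightarrow$ (b): $A^{-1}(p_q, \varepsilon \circ \hat{p}_{\tilde{q}})$ is continuous on the compact set $[-\hat{p}_q, \hat{p}_q]$, so the solution set is the compact image of $p_q \mapsto A^{-1}(\cdot) b(\cdot)$, hence bounded. For (b) $\Rightarrow$ (a): I would specialize to $b(\check{p}) = 0$ and $b_i = 0$ for all $i$, so $b(p_q, \varepsilon \circ \hat{p}_{\tilde{q}}) \equiv 0$, reducing the solution set to $\bigcup_{p_q \in [-\hat{p}_q, \hat{p}_q]} \ker A(p_q, \varepsilon \circ \hat{p}_{\tilde{q}})$; a bounded union of linear subspaces forces each kernel to be trivial, yielding (a).

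The main obstacle is (a) $\Rightarrow$ (i). I would apply Theorem \ref{BIT08} to the parametric system $A(p) x = 0$, i.e., $b(p) \equiv 0$; each hypersurface then collapses to $x(p_q, \varepsilon \circ \hat{p}_{\tilde{q}}) = A^{-1}(\cdot) \cdot 0 = 0$, so Theorem \ref{BIT08} gives $\partial \Sigma(A(p), 0, \inum{p}) \subseteq \{0\}$. The solution set $\Sigma = \bigcup_{p \in \inum{p}} \ker A(p)$ is closed; if some $A(p^*)$ were singular, then $V := \ker A(p^*) \subseteq \Sigma$ would be a nontrivial proper linear subspace of $\R^n$, whose topological boundary in $\R^n$ equals $V$ itself, forcing $\partial \Sigma \supsetneq \{0\}$---a contradiction. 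The delicate points I expect are (1) securing the hypothesis of Theorem \ref{BIT08} that $A(\check{p})$ is nonsingular, and (2) excluding the degenerate case $\Sigma = \R^n$ (in which $\partial \Sigma = \emptyset$ trivially satisfies the bound); both can be handled by a parallel application of Theorem \ref{BIT08} with a right-hand side $b$ chosen so that $b(p^*)$ lies in the range of $A(p^*)$, producing an unbounded affine fiber in the corresponding $\Sigma$ while its hypersurface union remains bounded by (a).
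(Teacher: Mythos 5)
Your overall strategy is the same as the paper's: (i)$\Rightarrow$(a) by direct substitution (the paper just cites Definition \ref{defReg}), (a)$\Leftrightarrow$(b) treated as elementary, and the substantive implication obtained by combining Theorem \ref{BIT08}'s boundary inclusion with the boundedness of the restricted hypersurfaces to contradict the existence of a singular member. The paper runs this as (b)$\Rightarrow$(i) for a general right-hand side, asserting that singularity forces some boundary hypersurface to be unbounded; you instead specialize to $b\equiv 0$ and argue through kernels, which is a legitimate and arguably cleaner instantiation under the natural reading of (b). However, two of your steps do not go through as written.

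First, the inference ``$V\subseteq\Sigma$ and $\partial V=V$, hence $\partial\Sigma\supsetneq\{0\}$'' is invalid, since taking boundaries is not monotone under inclusion; the obstruction is not only the single case $\Sigma=\R^n$, because $\Sigma$ may acquire interior that swallows $V$ without being all of $\R^n$. What rescues the step is that for $b\equiv 0$ the set $\Sigma=\bigcup_{p}\ker A(p)$ is a closed cone, and a closed cone other than $\{0\}$ and $\R^n$ has unbounded boundary (intersect with the unit sphere and scale); this cone argument also disposes of your degenerate case (2) and should be made explicit. Second, and more seriously, your repair of delicate point (1) is circular: you cannot secure the hypothesis that $A(\check{p})$ is nonsingular ``by a parallel application of Theorem \ref{BIT08}'', since that theorem is unavailable until its hypothesis holds. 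Note that the subfamilies in (a) consist of matrices with $p_j=\check{p}_j\pm\hat{p}_j$ for $j\in\tilde{q}$, i.e., they live on the $m$-dimensional faces of the parameter box, none of which contains $\check{p}$; so ``(a) implies $\check{A}$ nonsingular'' is a genuinely nontrivial claim (the paper asserts it without proof) and needs an independent argument, e.g., via continuity or sign of the determinant, before Theorem \ref{BIT08} can be invoked at all. Finally, for the $b\neq 0$ variant you sketch at the end, ``$\Sigma$ unbounded while the hypersurface union is bounded'' is not yet a contradiction: an unbounded closed set can have bounded or even empty boundary if it contains a neighborhood of infinity, so that possibility must also be excluded.
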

\begin{proof}
(i)$\Rightarrow$(a) by Definition \ref{defReg}.

(a)$\Leftrightarrow$(b) is obvious.

(b)$\Rightarrow$(i) Proof  by contradiction: assume that $\{A(p),
\inum{p}\}$ is singular. This implies that for any $b(p)\in\R^n$,
some boundary hypersurfaces defining the boundary
$\partial\Sigma\left(A(p), b(p), \inum{p}\right)$ are unbounded. On
the other hand, (a) implies nonsingularity of $\check{A}$ and, by
Theorem \ref{BIT08},
$$\partial\Sigma\left(A(p),
b(p), \inum{p}\right)\subseteq \bigcup_{q\in{\cal
K}(m)}\bigcup_{\varepsilon\in Q_{K-m}}
\left.x\left(p_q,\varepsilon\circ\hat{p}_{\tilde{q}}\right)\right|_{[-\hat{p}_q,
\hat{p}_q]}.
$$
By (b), the set in the right-hand side above contains only bounded
restricted parametric hypersurfaces. Thus, we have a contradiction
between the assumption and (b).
\end{proof}

If  $K\geq n$, Theorem \ref{nscG0} reduces the general regularity
problem to $\binom{K}{n-1}2^{K-n+1}$ regularity problems involving
$n-1$ interval parameters. The following theorem presents equivalent
necessary and sufficient conditions for regularity of an interval
parametric matrix by $K2^{K-1}$ parametric problems involving only
one interval parameter $q\in{\cal K}(1)$. It should be noted,
however, that with obvious modifications Theorem \ref{nscGen} below
holds true and can be applied for any $t$, $1\leq t\leq m$, and
$q\in{\cal K}(t)$.

\begin{theorem}\label{nscGen}
For  an $n\times n$ interval parametric matrix $A(p)$,
$p\in\inum{p}\in\IR^K$, the following conditions are equivalent:
\begin{itemize}
\item[(i)] $\{A(p), \inum{p}\}$ is regular,

\item[(ii)] each interval parametric matrix $\{A(p_k,\varepsilon\circ\hat{p}_{\tilde{q}}), [-\hat{p}_k, \hat{p}_k]\}$, $k\in{\cal K}$,
$\varepsilon\in Q_{K-1}$, $\tilde{q}={\cal K}\setminus\{k\}$, of the
form
\begin{equation}\label{A(p_k)}
A(p_k, \varepsilon\circ\hat{p}_{\tilde{q}}) = A(\check{p}) + p_kA_k
+ \sum_{i=1, i\neq k}^K
\varepsilon_i\hat{p}_iA_i, \qquad 
p_k\in [-\hat{p}_k, \hat{p}_k],
\end{equation}
is regular,

\item[(iii)] for a vector $b(p)\in\R^n$
and each $k\in{\cal K}$, $\varepsilon\in Q_{K-1}$, the solution set
\begin{multline*}
\Sigma\left(A(p_k,\varepsilon\circ\hat{p}_{\tilde{q}}),
b(p_k,\varepsilon\circ\hat{p}_{\tilde{q}}), [-\hat{p}_k,
\hat{p}_k]\right) := \\
\left\{\left(A(\check{p})+p_kA_k+\sum_{i=1, i\neq k}^K
\varepsilon_i\hat{p}_iA_i\right)x =
b(p_k,\varepsilon\circ\hat{p}_{\tilde{q}}) \mid \exists p_k\in
[-\hat{p}_k, \hat{p}_k]\right\}
\end{multline*}
is bounded.

\item[(iv)]
each numerical matrix of the form
$$A(\varepsilon^{(k)}) = A(\check{p}) - \sum_{i=1}^K
\varepsilon_i^{(k)}\hat{p}_iA_i,
$$
where $0\leq |\varepsilon_k^{(k)}|\leq 1$,
$\varepsilon_i^{(k)}\in\{-1,1\}$ for $i=1,\ldots,K$, $i\neq k$, and
$k\in{\cal K}$, is nonsingular,

\item[(v)]
$\det\left(A(p_k, \varepsilon\circ\hat{p}_{\tilde{q}})\right) \neq
0$ for each $p_k\in [-\hat{p}_k, \hat{p}_k]$, $k\in{\cal K}$,
$\varepsilon\in Q_{K-1}$, where $A(p_k,
\varepsilon\circ\hat{p}_{\tilde{q}})$ is defined in (\ref{A(p_k)}),

\item[(vi)]  $\check{A}$ is nonsingular and
$$
\rho_0\left(B(\varepsilon_k, \tilde{\varepsilon})\right)<1, \qquad
B(\varepsilon_k, \tilde{\varepsilon}):=
\varepsilon_k\hat{p}_k\check{A}^{-1}A_k + \sum_{i=1, i\neq k}^K
\tilde{\varepsilon}_i\hat{p}_i\check{A}^{-1}A_i,
$$
for each $k\in{\cal K}$, $0\leq |\varepsilon_k|\leq 1$,
$\tilde{\varepsilon}\in Q_{K-1}$,

\end{itemize}
\end{theorem}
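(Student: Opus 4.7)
The plan is to assemble the six equivalences in three stages: the bookkeeping equivalences (i)$\Leftrightarrow$(ii)$\Leftrightarrow$(iv)$\Leftrightarrow$(v) from definitions and reparameterization, (ii)$\Leftrightarrow$(iii) and (v)$\Rightarrow$(i) by the contradiction template of the proof of Theorem \ref{nscG0} applied in the $t=1$ setting, and finally (v)$\Leftrightarrow$(vi) through the determinantal factorization of $A(p_k,\varepsilon\circ\hat{p}_{\tilde{q}})$ by $\check{A}$.

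First I would dispatch the bookkeeping. Each matrix appearing in (ii) equals $A(p^{*})$ for $p^{*}_k:=\check{p}_k+p_k\in\inum{p}_k$ and $p^{*}_i:=\check{p}_i+\varepsilon_i\hat{p}_i\in\inum{p}_i$ ($i\neq k$), so (i)$\Rightarrow$(ii) is immediate from Definition \ref{defReg}. Condition (v) is simply the determinantal rephrasing of regularity in (ii), and the substitution $p_k=-\varepsilon^{(k)}_k\hat{p}_k$, $\varepsilon_i=-\varepsilon^{(k)}_i$ ($i\neq k$) bijects the family in (iv) with that in (v), yielding (ii)$\Leftrightarrow$(iv)$\Leftrightarrow$(v). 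For (ii)$\Leftrightarrow$(iii) I would invoke the standard one-parameter fact that an affine matrix family on a compact interval is uniformly nonsingular iff the associated parametric solution set is bounded for every right-hand side: in one direction continuity of $p_k\mapsto A(p_k,\varepsilon\circ\hat{p}_{\tilde{q}})^{-1}b(\cdot)$ on a compact set gives boundedness, in the other a singular matrix forces divergence of solutions whenever $b$ is not in the range of the singular limit.

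To close the loop I would prove (v)$\Rightarrow$(i) along the lines of the proof of Theorem \ref{nscG0}(b)$\Rightarrow$(i). Assume $\{A(p),\inum{p}\}$ is singular and choose $b(p)$ so that $\Sigma(A(p),b(p),\inum{p})$ has an unbounded boundary component. With $\check{A}$ nonsingular (derived inside the argument, or absorbed via condition (vi)), the $t=1$ modification of Theorem \ref{BIT08} announced before the statement contains $\partial\Sigma$ in the union of the restricted one-parameter hypersurfaces $x(p_k,\varepsilon\circ\hat{p}_{\tilde{q}})|_{[-\hat{p}_k,\hat{p}_k]}$. Condition (v) makes each such hypersurface the image of a compact interval under a continuous map, hence bounded, contradicting unboundedness of $\partial\Sigma$.

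Finally, for (v)$\Leftrightarrow$(vi) I would use the factorization
$$A(p_k,\varepsilon\circ\hat{p}_{\tilde{q}}) = \check{A}\bigl(I + B(p_k/\hat{p}_k,\varepsilon)\bigr),$$
legitimate because (vi) postulates $\check{A}$ nonsingular. Then (v) becomes ``$-1\notin\sigma(B(\alpha,\tilde{\varepsilon}))$ for every $k$, $\alpha\in[-1,1]$, $\tilde{\varepsilon}\in Q_{K-1}$,'' while (vi) is the stronger statement that no real eigenvalue of any such $B$ has magnitude $\geq 1$. The direction (vi)$\Rightarrow$(v) is immediate. For (v)$\Rightarrow$(vi), the linearity identity $B(-\alpha,-\tilde{\varepsilon})=-B(\alpha,\tilde{\varepsilon})$ converts a real eigenvalue $+1$ at $(\varepsilon_k^{*},\tilde{\varepsilon}^{*})$ into a real eigenvalue $-1$ at $(-\varepsilon_k^{*},-\tilde{\varepsilon}^{*})$, directly contradicting (v); the case $|\lambda^{*}|>1$ is handled by continuously varying the continuous coordinate $\varepsilon_k$ toward $0$ and, if needed, reassigning which coordinate plays the continuous role, invoking the intermediate value theorem on a continuous real branch of the eigenvalue to produce a crossing through $\pm 1$ inside the admissible range, at which point the sign-flip again yields a contradiction. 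I expect this last step to be the main obstacle, because the $K-1$ discrete coordinates constrained to $\{-1,+1\}$ obstruct the naive rescaling $B\mapsto -B/\lambda^{*}$, and the combined use of the sign-negation symmetry with the freedom to reassign the continuous coordinate is the delicate mechanism that actually drives the implication through.
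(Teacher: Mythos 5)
Most of your architecture coincides with the paper's: (i)$\Rightarrow$(ii) by reparameterization, (ii)$\Leftrightarrow$(iii)$\Leftrightarrow$(iv)$\Leftrightarrow$(v) as bookkeeping, and (iii) (equivalently (v)) $\Rightarrow$ (i) via the one-parameter boundary hypersurfaces of Theorem \ref{BIT08}. The genuine problem is your treatment of (v)$\Rightarrow$(vi), and you correctly sense where it is. Your plan for the case of a real eigenvalue $\lambda^{*}$ with $|\lambda^{*}|>1$ --- follow ``a continuous real branch of the eigenvalue'' as $\varepsilon_k$ varies and invoke the intermediate value theorem to force a crossing through $\pm1$ --- does not work: real eigenvalues of a continuously varying real matrix do not form continuous real branches (a real eigenvalue can collide with another and leave the real axis as a complex conjugate pair before ever reaching $\pm1$), and even if a branch stayed real on all of $[-1,1]$ there is no reason it attains the value $\pm1$ inside the admissible range. ``Reassigning which coordinate plays the continuous role'' does not repair this, because the coordinate you abandon must then sit at a vertex, which is exactly the constraint that blocks the natural rescaling. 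As written, this step is a gap, not a delicate mechanism.

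The paper avoids the difficulty entirely by proving (i)$\Rightarrow$(vi) rather than (v)$\Rightarrow$(vi), and the distinction matters. From $B(\varepsilon_k,\tilde{\varepsilon})x=\lambda x$ with $|\lambda|\ge 1$ one gets that $I-\tfrac{1}{\lambda}B(\varepsilon_k,\tilde{\varepsilon})=I-B(\varepsilon_k/\lambda,\tilde{\varepsilon}/\lambda)$ is singular; dividing \emph{all} coordinates by $\lambda$ moves the vertex coordinates $\tilde{\varepsilon}_i\in\{-1,1\}$ into the interior of $[-1,1]$, so the resulting singular matrix lies in the full $K$-parameter family $\left\{I-\sum_{i=1}^{K}p_i\check{A}^{-1}A_i,\ [-\hat{p},\hat{p}]\right\}$, whose regularity is guaranteed by (i) --- no eigenvalue continuation is needed. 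Since you have already established (v)$\Leftrightarrow$(i) independently, you are free to make exactly this substitution: replace your (v)$\Rightarrow$(vi) argument by (i)$\Rightarrow$(vi) via the scaling above, and close the loop with (vi)$\Rightarrow$(i) by noting that a singular $A(\tilde{p})$ would, through (i)$\Leftrightarrow$(v), produce some $B(p_k/\hat{p}_k,\varepsilon^{(k)})$ with real eigenvalue $1$, contradicting $\rho_0<1$. Your $\lambda=\pm1$ sign-flip observation is correct but becomes unnecessary once the argument is routed through (i).
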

\begin{proof}
(i)$\Rightarrow$(ii) by Definition \ref{defReg}.

(ii)$\Leftrightarrow$(iii) is obvious.

The proof (iii)$\Rightarrow$(i) goes  similarly to the proof
(b)$\Rightarrow$(i) in Theorem \ref{nscG0}, since each two
components of $x\left(p_k,\varepsilon\circ\hat{p}_{\tilde{q}}\right)
= A^{-1}(p_k,\varepsilon\circ\hat{p}_{\tilde{q}})
b(p_k,\varepsilon\circ\hat{p}_{\tilde{q}})$ restricted to
$[-\hat{p}_k, \hat{p}_k]$ present $2$-dimensional projections of
either some boundary hypersurfaces of $\Sigma\left(A(p), b(p),
\inum{p}\right)$ or a subset of the solution set.

The equivalence between (ii), (iv) and (v) is obvious.

(i)$\Rightarrow$(vi) by contradiction. Assume that
$$B(\varepsilon_k, \tilde{\varepsilon})x =
\lambda x
$$
for some $x\neq 0$, $|\lambda|\geq 1$, $k=1,\ldots,K$, $0\leq
|\varepsilon_k|\leq 1$, $\tilde{\varepsilon}\in Q_{K-1}$. Then
$$\left(I - \frac{1}{\lambda}B(\varepsilon_k, \tilde{\varepsilon})\right)x = 0.$$
Hence $I -\frac{1}{\lambda}B(\varepsilon_k, \tilde{\varepsilon})$ is
singular. Since $\delta_k=\varepsilon_k/\lambda$, $|\delta_k| \leq
1$ and $\tilde{\delta}=\tilde{\varepsilon}/\lambda$,
$|\tilde{\delta}| \leq 1$,
$$I - B(\delta_k, \tilde{\delta}) \in
 \left\{I-\sum_{i=1}^K p_i\check{A}^{-1}A_i, [-\hat{p},\hat{p}]\right\}. $$
The latter means that a singular matrix belongs to a set of
nonsingular ones, which is a contradiction.

(vi)$\Rightarrow$(i) Assume that $\{A(p), \inum{p}\}$ is singular
and $\check{A}=A(\check{p})$ is nonsingular. By
(i)$\Leftrightarrow$(v)
\begin{equation}\label{6->1}
\det\left(I-\tilde{p}_k\check{A}^{-1}A_k - \sum_{i=1,i\neq k}^K
\varepsilon^{(k)}_i\hat{p}_i\check{A}^{-1}A_i\right)=0
\end{equation}
for some $k\in{\cal K}$, $\tilde{p}_k\in [-\hat{p}_k, \hat{p}_k]$,
$\varepsilon^{(k)}\in Q_{K-1}$. This implies that the maximum
magnitude real spectral radius of each matrix
$\tilde{p}_k\check{A}^{-1}A_k + \sum_{i=1,i\neq k}^K
\varepsilon^{(k)}_i\hat{p}_i\check{A}^{-1}A_i$, which satisfies
(\ref{6->1}), is equal to $1$. The latter contradicts to (vi).
\end{proof}

All the conditions of Theorem \ref{nscGen}, except (iv) which is
numerical equivalent of (ii) and (vi) which is numerical equivalent
of the corresponding parametric problem, are in terms of parametric
problems involving one interval parameter. By the following example
we illustrate one of the advantages of considering parametric
problems involving only one interval parameter (instead of more) in
proving regularity/singularity of an interval parametric matrix.
Some other advantages are discussed latter on.

\begin{example}\label{1param}
Consider the interval parametric matrix
$$\begin{pmatrix}
1 + p_1 & p_3 & -1\\  p_2 & 1 + p_1 & p_3\\
 -1 &  p_2 & p_1/3
\end{pmatrix}, \qquad \begin{matrix}p_1\in [-\frac{3}{4}, \frac{3}{4}],\\ p_2, p_3\in [-\frac{1}{2}, \frac{1}{2}].\end{matrix}
$$
The sufficient condition \cite{Popova04}
\begin{equation} \label{sufCond}
\rho\left(\sum_{i=1}^K \hat{p}_k|A^{-1}(\check{p})A_i|\right) < 1
\end{equation}
for regularity of an interval parametric matrix does not hold for
the considered interval parametric matrix, $\rho \approx 1.54$. The
sufficient conditions from \cite{Rump94} and \cite{Popova19} also
fail.

We apply this sufficient condition for testing regularity of the
interval parametric matrices in Theorem \ref{nscGen}-(ii) and prove
that for each $k=1,2,3$, and each $\varepsilon^{(k)}\in Q_2$ the
corresponding interval parametric matrix (involving only one
interval parameter) is regular, $\max_{k\in{\cal K},
\varepsilon^{(k)}\in Q_2}\{\rho_k\}\approx 0.841$. Thus, the
considered interval parametric matrix, involving three interval
parameters, is regular by Theorem \ref{nscGen}-(ii).
\end{example}

It is advantageous that the conditions of Theorem \ref{nscGen} are
read negated. Thus, when testing some of the equivalent conditions,
we either find a singular matrix within the interval parametric one,
or prove regularity of the latter. This is illustrated in the
numerical examples. This double property of the necessary and
sufficient regularity conditions of Theorem \ref{nscGen} is also
illustrated by the Algorithm \ref{algo} below.

A two-fold application of Theorem \ref{nscGen}-(iii), for verifying
nonsingularity of the parametric matrix and finding the exact
interval hull of a parametric solution set, is discussed in Section
\ref{Hull}.

We do not know any method providing bounds for the determinant of an
interval matrix involving general parameter dependencies. The
simplest way to check condition Theorem \ref{nscGen}-(v) is to
determine if each of the univariate polynomials
\begin{equation}\label{numDet}
\det\left(A(\check{p})+p_kA_k+\sum_{i=1,i\neq k}^K
\varepsilon_i\hat{p}_iA_i\right), \qquad k\in{\cal K}, \;
\varepsilon\in Q_{K-1},
\end{equation}
has real roots in the corresponding interval $[-\hat{p}_k,
\hat{p}_k]$. Furthermore, the real roots (if present) for a
polynomial can be isolated in polynomial time \cite{Basu}. Hence,
determining the real roots, if any, we obtain {\em singular} real
matrices within the interval parametric matrix.

Following the proof (vi)$\Leftrightarrow$(i) in Theorem
\ref{nscGen}, condition (vi) can be verified by finding all real
solutions of the following constrained polynomial equation
\begin{equation}\label{numRho}
\begin{split}
&\det\left(\lambda I - p_kA^{-1}(\check{p})A_k - \sum_{i=1,i\neq k}^K \varepsilon_i\hat{p}_iA^{-1}(\check{p})A_i\right) = 0,\\
&-\hat{p}_k \leq p_k \leq \hat{p}_k, \qquad 0\leq |\lambda| \leq 1
\end{split}
\end{equation}
for each $k\in{\cal K}$, $\varepsilon\in Q_{K-1}$. Alternatively,
one can use some interval method, if applicable, for bounding the
range of the real eigenvalues of the corresponding one-parameter
interval matrix in (vi), for example, \cite{Kolev10},
\cite{HlaDaney11}. However, it is proven in \cite{KreiLakRohnKa}
that bounding the eigenvalues is an NP-hard problem even in case of
a nonparametric interval matrix.

As in the nonparametric case \cite{RohnLAA89}, $\varrho_0$ in
Theorem \ref{nscGen}-(vi) cannot be replaced by $\rho$. This is
demonstrated below in Example \ref{Hudak}, which is parametric
version of Hudak's example \cite{RohnLAA89}, \cite{Hudak}.

\begin{example}\label{Hudak}
Consider the interval parametric matrix
$$\begin{pmatrix}
p_1 & -43 & 49\\ -31 & p_1 & -35 \\ 25 & -35 & p_2
\end{pmatrix}, \qquad \begin{matrix}p_1\in [31, 41],\\ p_2\in [28, 38].\end{matrix}
$$
The sufficient condition (\ref{sufCond}) for regularity of an
interval parametric matrix is not satisfied, $\rho \approx 1.72$.

For each $k=1,2$ and $\varepsilon\in\{-1,1\}$, the corresponding
polynomial (\ref{numDet}) does not have real roots in the parameter
interval, which means that Theorem \ref{nscGen}-(v) holds true and
the considered interval parametric matrix is regular.

For each $k=1,2$ and $\varepsilon\in\{-1,1\}$, solving the
corresponding  equation (\ref{numRho}), we find the real values of
$\varrho_0 = |\lambda|$, which lie in the interval $[-1,1]$. For
each $k=1,2$ and $\varepsilon\in\{-1,1\}$, the obtained biggest
$\varrho_0 = |\lambda|$ is $\approx 0.968164$. The latter also means
that Theorem \ref{nscGen}-(vi) holds true and the considered
interval parametric matrix is regular.
\end{example}

The proof (vi)$\Leftrightarrow$(i) in Theorem \ref{nscGen} shows
that if $\{A(p), \inum{p}\}$ is singular, then it contains real
singular matrices of special form. The proof also reveals a
methodology (\ref{numDet}) for finding real singular matrices within
an interval parametric matrix. Solving the problems (\ref{numDet})
is computationally simpler than solving the problems (\ref{numRho})
since the first problem involves only one interval parameter.

\begin{algorithm} \label{algo}
Finding singular matrices within an interval parametric matrix
$\left\{A(p),\inum{p}\right\}$ or proving regularity of the latter.
\medskip

\begin{enumerate}
\item {\bf For} $k=1,2,\ldots,K$; $q={\cal K}\setminus\{k\}$;

\item {\bf For} $\varepsilon^{(k)}\in Q_{K-1}$

\item Finding the real solutions of the constrained polynomial
equation
$$
\det\left(A(p_k, \varepsilon^{(k)}\circ\hat{p}_q)\right) = 0, \qquad
-\hat{p}_k\leq p_k\leq \hat{p}_k,
$$
where $A(p_k, \varepsilon^{(k)}\circ\hat{p}_q) := A(\check{p}) +
p_kA_k + \sum_{j\in q} \varepsilon^{(k)}\hat{p}_jA_j$.

{\bf Denote}
$$
{\cal L}(k,\varepsilon^{(k)}) := \left\{\{k, \varepsilon^{(k)},
\tilde{p}_k\} \mid \tilde{p}_k\in [-\hat{p}_k,\hat{p}_k],
\det(A(\tilde{p}_k, \varepsilon^{(k)}\circ\hat{p}_q))=0\right\}.
$$

\item  {\bf If } ${\cal L}(k,\varepsilon^{(k)})\neq\emptyset$, {\bf
then}\quad  Return ${\cal L}(k,\varepsilon^{(k)})$;

\hspace*{1.5in} {\bf Terminate: }$\left\{A(p),\inum{p}\right\}$ is
singular.

\item {\bf End } (For of $k$)
\item {\bf End } (For of $\varepsilon^{(k)}$)
\item {\bf Return }$\left\{A(p),\inum{p}\right\}$ is regular.
\end{enumerate}
\end{algorithm}

With obvious modifications, Algorithm \ref{algo} can find all real
singular matrices that correspond to the negation of Theorem
\ref{nscGen}-(v). Proving singularity is not a priory exponential.
One can apply some heuristics to start Algorithm \ref{algo} with a
parameter, which most likely will give a singular matrix.

The following example demonstrates that, in general, proving
regularity we cannot reduce neither the number $K$ of tested
parameters nor the number of $\varepsilon^{(k)}\in Q_{K-1}$.

\begin{example}\label{redNumb}
Consider the interval parametric matrix
$$A(p)=\begin{pmatrix}
\frac{6}{5} + p_1 & p_3 & -1\\ 2 + p_2 & \frac{6}{5} + p_1 & p_3\\
 -1 & 2 + p_2 & \frac{1}{3}p_1
\end{pmatrix}, \qquad p_i\in [-1, 1], \; i=1,2,3.
$$
For each $k=1,2,3$ and $\varepsilon^{(k)}\in Q_2$, the corresponding
polynomial  (\ref{numDet}) does not have real roots in the parameter
interval, except for $k=2$ and $\varepsilon^{(2)} = (1,-1)^\top$.
This means that Theorem \ref{nscGen}-(v) does not hold true and the
considered interval parametric matrix is singular.

In the exceptional case, ${\rm det}\left(A(p_2, \hat{p}_1,
-\hat{p}_3)\right) = -\frac{13}{25}-\frac{22}{15}p_2-p_2^2$ and we
obtain two values for $p_2\in [-1,1]$ which make the determinant
equal to zero. Thus, for $\tilde{p}'= (1, -13/15, -1)^\top$ and
$\tilde{p}''= (1, -3/5, -1)^\top$ we obtain explicitly two real
singular matrices contained in the considered interval parametric
matrix.
\end{example}

\section{Interval hull of a parametric solution set} \label{Hull}

If $A(\check{p})$ is nonsingular, for each $k\in{\cal K}$,
$\varepsilon\in Q_{K-1}$,
\begin{eqnarray*}
\Sigma\left(A(p_k,\varepsilon\circ\hat{p}_{\tilde{q}}),
b(p_k,\varepsilon\circ\hat{p}_{\tilde{q}}), [-\hat{p}_k,
\hat{p}_k]\right) & = &
x\left(p_k,\varepsilon\circ\hat{p}_{\tilde{q}}\right)\mid_{[-\hat{p}_k,
\hat{p}_k]} \\
& = &
\left.\left(A(p_k,\varepsilon\circ\hat{p}_{\tilde{q}})\right)^{-1}b(p_k,\varepsilon\circ\hat{p}_{\tilde{q}})\right|_{[-\hat{p}_k,
\hat{p}_k]}
\end{eqnarray*}
is a piece of a one-parameter curve in $\R^n$, restricted to $p_k\in
[-\hat{p}_k, \hat{p}_k]$, and presents either a piece of the
boundary of $\Sigma\left(A(p), b(p), \inum{p}\right)$ or a subset of
the latter solution set. By Theorem \ref{BIT08} and Theorem
\ref{nscGen}-(iii) we have the following proposition.

\begin{proposition}\label{prop1}
If $A(\check{p})\in\R^{n\times n}$ is nonsingular and $b(p)$ is a
vector in $\R^n$, then
\begin{equation*}
\square\Sigma\left(A(p), b(p), \inum{p}\right) = \bigcup_{k\in{\cal
K}}\bigcup_{\varepsilon\in Q_{K-1}} \square
\Sigma\left(A(p_k,\varepsilon\circ\hat{p}_{\tilde{q}}),
b(p_k,\varepsilon\circ\hat{p}_{\tilde{q}}), [-\hat{p}_k,
\hat{p}_k]\right),
\end{equation*}
where for $i=1,\ldots,n$ \begin{multline*} \square
\Sigma_i\left(A(p_k,\varepsilon\circ\hat{p}_{\tilde{q}}),
b(p_k,\varepsilon\circ\hat{p}_{\tilde{q}}), [-\hat{p}_k,
\hat{p}_k]\right) = \\ \left[\inf_{p_k\in [-\hat{p}_k, \hat{p}_k]}
x_i\left(p_k,\varepsilon\circ\hat{p}_{\tilde{q}}\right),
\sup_{p_k\in [-\hat{p}_k, \hat{p}_k]}
x_i\left(p_k,\varepsilon\circ\hat{p}_{\tilde{q}}\right)\right].
\end{multline*}
\end{proposition}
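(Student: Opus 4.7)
The plan is to derive the proposition directly from Theorem \ref{BIT08} specialized to the one-parameter case $t=1$, $q=\{k\}$, $\tilde q=\mathcal{K}\setminus\{k\}$. (The remark preceding Theorem \ref{nscGen} guarantees that Theorem \ref{BIT08} holds for any $1\leq t\leq m$, so this specialization is legitimate; the hypothesis $A(\check p)$ nonsingular is exactly what Theorem \ref{BIT08} requires.) The specialization yields the two-sided inclusion
\[
\partial\Sigma(A(p),b(p),\inum{p})\;\subseteq\;\bigcup_{k\in\mathcal{K}}\bigcup_{\varepsilon\in Q_{K-1}}\left.x(p_k,\varepsilon\circ\hat p_{\tilde q})\right|_{[-\hat p_k,\hat p_k]}\;\subseteq\;\Sigma(A(p),b(p),\inum{p}),
\]
in which each restricted curve coincides with the one-parameter solution set $\Sigma(A(p_k,\varepsilon\circ\hat p_{\tilde q}),b(p_k,\varepsilon\circ\hat p_{\tilde q}),[-\hat p_k,\hat p_k])$ appearing in the statement.

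Componentwise, the claimed equality of interval hulls reduces to two elementary bounds. Fix a coordinate $i$. The right-hand inclusion above gives, for every $(k,\varepsilon)$, the squeeze $\inf_\Sigma x_i\leq\inf_{p_k} x_i(\cdot)$ and $\sup_{p_k} x_i(\cdot)\leq\sup_\Sigma x_i$, which already yields the inclusion $\bigcup_{k,\varepsilon}\square\Sigma_{k,\varepsilon}\subseteq\square\Sigma$ componentwise. For the reverse inclusion I would argue that any $x^\star\in\Sigma$ realizing $\sup_\Sigma x_i$ must lie on $\partial\Sigma$: otherwise an open neighborhood of $x^\star$ inside $\Sigma$ would supply a point with strictly larger $i$-th coordinate. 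By the left-hand inclusion $x^\star$ then lies on some restricted curve, so $\sup_\Sigma x_i\leq\max_{k,\varepsilon}\sup_{p_k} x_i$. The infimum case is symmetric, and reassembling over $i$ reproduces the proposition's equality.

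The main obstacle is the case in which $\Sigma$ is itself unbounded---which is allowed under the proposition's hypothesis since only $A(\check p)$, not the whole family, is assumed nonsingular---so that some $\sup_\Sigma x_i=+\infty$ and attainment fails. Here the interior/boundary argument must be replaced by a level-set argument: for every finite $M<\sup_\Sigma x_i$ the set $\{x\in\Sigma:x_i\geq M\}$ has nonempty boundary in $\Sigma$, which by the sandwich meets at least one restricted curve and forces that curve to contain points with arbitrarily large $i$-th coordinate. With this topological bookkeeping the asserted equality holds in the extended-real sense and the proof is complete.
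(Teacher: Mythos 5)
Your overall route---sandwich $\partial\Sigma$ between the union of restricted one-parameter curves and $\Sigma$ itself, then read off the componentwise extrema from the boundary---is the same one the paper gestures at, but the step on which everything rests is not available. Theorem~\ref{BIT08} covers $\partial\Sigma\left(A(p),b(p),\inum{p}\right)$ by restricted hypersurfaces depending on $m=\min\{n-1,K\}$ free parameters, not by one-parameter curves. Your claimed ``left-hand inclusion''
$$
\partial\Sigma\left(A(p),b(p),\inum{p}\right)\subseteq\bigcup_{k\in{\cal K}}\bigcup_{\varepsilon\in Q_{K-1}}\left.x\left(p_k,\varepsilon\circ\hat{p}_{\tilde{q}}\right)\right|_{[-\hat{p}_k,\hat{p}_k]}
$$
cannot hold in general: whenever $\min\{n-1,K\}\geq 2$ and $\Sigma$ has nonempty interior in $\R^n$, its boundary has topological dimension at least $2$, whereas the right-hand side is a finite union of curves. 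The remark you invoke (``with obvious modifications Theorem~\ref{nscGen} below holds true \ldots for any $t$'') concerns Theorem~\ref{nscGen}, i.e.\ the regularity criteria, not Theorem~\ref{BIT08}; it does not license replacing $m$ by $1$ in the boundary covering. Consequently your reverse-inclusion argument---a maximizer of $x_i$ over $\Sigma$ lies on $\partial\Sigma$, hence on some one-parameter curve---breaks at the second ``hence'': Theorem~\ref{BIT08} only places the maximizer on an $(n-1)$-parameter restricted hypersurface, and a further argument is needed to push the extremum down to a one-parameter edge of the parameter box (for instance, by analyzing where the extrema of the rational function $x_i\left(p_q,\varepsilon\circ\hat{p}_{\tilde{q}}\right)$ over $[-\hat{p}_q,\hat{p}_q]$ are attained, or by a recursion over the sub-solution sets). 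That reduction is the actual content of Proposition~\ref{prop1}, and it is missing from your argument; note that the paper itself supplies only a one-line justification citing Theorem~\ref{BIT08} and Theorem~\ref{nscGen}-(iii), so you cannot simply defer to it either.

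The parts of your proposal that do work are the forward inclusion (each restricted curve is a subset of $\Sigma$, so each $\square\Sigma\left(A(p_k,\varepsilon\circ\hat{p}_{\tilde{q}}), b(p_k,\varepsilon\circ\hat{p}_{\tilde{q}}),[-\hat{p}_k,\hat{p}_k]\right)$ is contained componentwise in $\square\Sigma$) and the observation that attainment fails when $\Sigma$ is unbounded, which correctly matches the paper's appeal to Theorem~\ref{nscGen}-(iii) for the $\pm\infty$ dichotomy. As written, however, the proposal establishes only one of the two inclusions in the asserted equality.
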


Specifically, by Theorem \ref{nscGen}-(iii),
$$\inf/\sup x_i\left(p_k,\varepsilon\circ\hat{p}_{\tilde{q}}\right) = -/+\infty $$
if and only if $\{A(p), \inum{p}\}$ is singular. Thus, applying the
above Proposition together with Theorem \ref{nscGen}-(iii), we can
either find the exact interval hull of an interval parametric
solution set or prove that the interval parametric matrix is
singular.

The traditional approach for determining the interval hull of an
interval parametric linear system is to find the analytic solution
of the parametric system and then to bound the ranges of the
solution components in the parameter intervals. The latter problem
may be very difficult in presence of several interval parameters.
The methodology based on Proposition \ref{prop1} replaces the second
step in the traditional approach by a set of $K2^{K-1}$ range
computation problems involving a single interval variable. Since
each component of
$x\left(p_k,\varepsilon\circ\hat{p}_{\tilde{q}}\right)$ is a
rational function of one variable $p_k$, the exact extrema of
$x_i\left(p_k,\varepsilon\circ\hat{p}_{\tilde{q}}\right)$ in
$[-\hat{p}_k, \hat{p}_k]$ can be easily found for rational data.
This approach is applicable even when interval software is not
present since some available software, e.g., {\em
Mathematica}$^\circledR$ solve the latter problem. In terms of
guaranteed floating point computations, the methodology based on
Proposition \ref{prop1} can be combined with guaranteed interval
methods (for example \cite{GarloffEtAl12}) to reduce the number of
interval variables, and thus to obtain
\begin{itemize}
\item[(i)] guaranteed tight enclosure of the hull in floating point,
\item[(ii)] expanded applicability to problems with
 many interval parameters and/or problems with large parameter
 intervals.
\end{itemize}

In general, the methodology of Proposition \ref{prop1} is
appropriate for real-life problems involving relatively small number
of interval parameters (the computing time can be reduced by
distributed computations) or when the analysis is performed offline.
The proposed methodology is the only option for some systems with
very large parameter intervals. The present author applies this
methodology for finding the exact interval hull of parametric
solution sets with nonlinear boundary when constructs benchmark
examples and when estimates the quality of newly designed numerical
methods. Some examples of real-life applications involve analysis of
manipulators in robotics \cite{Notash15}, \cite{NazariNotash16},
\cite{Notash16} and guaranteed parameter set estimation for
exponential sums \cite{Garloff05}, \cite{Garloff07}.

\section{Radius of regularity}\label{regRad}
Since every interval $\inum{p}=[p^-,p^+]$ can be represented as
$\inum{p}=\check{p}+[-\hat{p},\hat{p}]$, every interval parametric
matrix $A(p)$, $p\in\inum{p}\in\IR^K$, can be equivalently
represented as $A(\check{p}+p)$, $p\in [-\hat{p}, \hat{p}]$.

\begin{definition}\label{radReg}
For a square interval parametric  matrix $A(p)$,
$p\in\inum{p}\in\IR^K$, its \emph{regularity radius} is defined by
$$
r^*(A(p),\inum{p}) :=  \inf\{r\geq 0 \mid A(\check{p}+rp) \text{  is
singular for some } p\in\R^K, p\in [-\hat{p}, \hat{p}]\}.
$$
\end{definition}
Specifically, $r^*(A(p),\inum{p})=\infty$ if no real $r$ exists such
that  $\left\{A(\check{p}+rp)\right.,$ $\left.p\in [-\hat{p},
\hat{p}]\right\}$ is singular. If $r^*(A(p),\inum{p}) < \infty$,
then the infimum is achieved as minimum.

Definition \ref{radReg} is not restricted to interval parametric
matrices involving affine linear parameter dependencies. In what
follows, however, we present an explicit formula for the radius of
regularity of interval parametric matrix involving affine-linear
dependencies and some necessary and sufficient conditions for its
infinite value.

\begin{theorem}\label{1/rhoThm}
Let $A(p)$ involve only affine-linear dependencies on
$p\in\inum{p}\in\IR^K$ and $A(\check{p})$ be nonsingular. Then,
\begin{multline}\label{1/rho}
r^*(A(p),\inum{p}) = \\
1/\max\left\{\varrho_0\left(B(p_k,\varepsilon^{(k)})\right) \mid
k\in{\cal K}, p_k\in [-\hat{p}_k,\hat{p}_k], \varepsilon^{(k)}\in
Q_{K-1}\right\},
\end{multline}
where $B(p_k,\varepsilon^{(k)}) =
p_k\left(A(\check{p})\right)^{-1}A_k +\sum_{i=1,i\neq k}^K
\varepsilon^{(k)}_i\hat{p}_i\left(A(\check{p})\right)^{-1}A_i$.
\end{theorem}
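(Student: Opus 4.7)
The plan is to reduce the theorem to Theorem \ref{nscGen}-(vi) applied to the rescaled family $\{A(\check{p}+rp),\ p\in[-\hat{p},\hat{p}]\}$ for each $r\geq 0$. First I would rewrite that family in affine-linear form: since $A(p)=A_0+\sum_k p_kA_k$, one has $A(\check{p}+rp)=A(\check{p})+\sum_{k=1}^K(rp_k)A_k$, so as $p_k$ ranges over $[-\hat{p}_k,\hat{p}_k]$ this is an interval parametric matrix with midpoint $\check{A}=A(\check{p})$ (nonsingular by hypothesis) and radius vector $r\hat{p}$. Thus Definition \ref{radReg} asks for the infimum of those $r\geq 0$ for which this rescaled family fails to be regular.

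Next I would apply Theorem \ref{nscGen}-(vi) to the rescaled family: it is regular if and only if for each $k\in{\cal K}$, each $|\varepsilon_k|\leq 1$, and each $\tilde{\varepsilon}\in Q_{K-1}$,
\[
\rho_0\!\Bigl(\varepsilon_k(r\hat{p}_k)\check{A}^{-1}A_k+\sum_{i\neq k}\tilde{\varepsilon}_i(r\hat{p}_i)\check{A}^{-1}A_i\Bigr)<1.
\]
Using the positive homogeneity $\rho_0(rM)=r\rho_0(M)$ for $r\geq 0$, and the substitution $p_k:=\varepsilon_k\hat{p}_k$ which sweeps $[-\hat{p}_k,\hat{p}_k]$ exactly as $\varepsilon_k$ sweeps $[-1,1]$, this rewrites as $r\cdot\rho_0(B(p_k,\tilde{\varepsilon}))<1$ for every admissible triple $(k,p_k,\tilde{\varepsilon})$, with $B$ exactly the matrix displayed in the theorem.

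Let $M:=\sup\{\rho_0(B(p_k,\varepsilon^{(k)}))\}$ over the index set in (\ref{1/rho}). The rescaled family is then regular iff $rM<1$; negating, it contains a singular matrix iff $rM\geq 1$, i.e., iff $r\geq 1/M$ (with the convention $1/0=\infty$). Taking the infimum over such $r$ in Definition \ref{radReg} yields $r^*(A(p),\inum{p})=1/M$, which is formula (\ref{1/rho}).

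The main obstacle I anticipate is justifying that the supremum $M$ is actually attained as a maximum, so that \textbf{sup} may be replaced by \textbf{max} as in the statement, and correspondingly that the infimum in Definition \ref{radReg} is a minimum. Since $Q_{K-1}$ is finite, this reduces to showing that $p_k\mapsto\rho_0(B(p_k,\varepsilon^{(k)}))$ attains its supremum on the compact interval $[-\hat{p}_k,\hat{p}_k]$; I would establish this from upper semicontinuity of $\rho_0$ on $\R^{n\times n}$ (real eigenvalues may disappear into the complex plane under perturbation, making $\rho_0$ drop, but it cannot jump upward in the limit) together with compactness. For the boundary case $r=1/M$ one then checks that, with $\lambda=\pm 1$ a real eigenvalue of the extremal $B(p_k,\varepsilon^{(k)})$, the matrix $I-\tfrac{1}{\lambda}B(p_k,\varepsilon^{(k)})$ is singular and lies in the family at radius $1/M$, confirming that the infimum is attained as promised.
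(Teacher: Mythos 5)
Your proof follows essentially the same route as the paper's: apply Theorem \ref{nscGen}-(vi) to the rescaled family $A(\check{p}+rp)$, $p\in[-\hat{p},\hat{p}]$, and use positive homogeneity of $\rho_0$ to pull out the factor $r$, so that singularity at level $r$ is equivalent to $r\cdot\max\rho_0(B(p_k,\varepsilon^{(k)}))\geq 1$. The only addition is your justification that the supremum of $\rho_0(B(\cdot,\varepsilon^{(k)}))$ over the compact interval $[-\hat{p}_k,\hat{p}_k]$ is attained (via upper semicontinuity of $\rho_0$), a point the paper's proof uses implicitly by writing $\max$; this is a correct and welcome refinement rather than a different approach.
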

\begin{proof}
Let $r^*(A(p),\inum{p})<\infty$. For a given $r\geq 0$, there exists
a singular matrix within the interval parametric matrix
$\left\{A(\check{p}+rp), p\in [-\hat{p}_k,\hat{p}_k]\right\}$, by
Theorem \ref{nscGen}-(vi), if and only if
$$\varrho_0\left(rB(p_k,\varepsilon^{(k)})\right)  \geq 1 $$
for some $k\in{\cal K}$, $p_k\in [-\hat{p}_k,\hat{p}_k]$,
$\varepsilon^{(k)}\in Q_{K-1}$. The latter means that
$$
r\max\left\{\varrho_0\left(B(p_k,\varepsilon^{(k)})\right) \mid
k\in{\cal K}, p_k\in [-\hat{p}_k,\hat{p}_k], \varepsilon^{(k)}\in
Q_{K-1}\right\} \geq 1.
$$
Hence the minimum value of $r$ is given by (\ref{1/rho}).

If $r^*(A(p),\inum{p})=\infty$, then  $A(\check{p}+rp)$ is
nonsingular for each  $r\in\R$, $r\geq 0$, $p\in [-\hat{p},
\hat{p}]$, according to Definition \ref{radReg}. By Theorem
\ref{nscGen}-(vi) this is equivalent to
$r\varrho_0\left(B(p_k,\varepsilon^{(k)})\right) < 1$ for each
$k\in{\cal K}$, $p_k\in [-\hat{p}_k,\hat{p}_k]$,
$\varepsilon^{(k)}\in Q_{K-1}$ and each $r\in\R$, $r\geq 0$. Hence
$\varrho_0\left(B(p_k,\varepsilon^{(k)})\right)=0$ for each
$k\in{\cal K}$, $p_k\in [-\hat{p}_k,\hat{p}_k]$,
$\varepsilon^{(k)}\in Q_{K-1}$ and the convention $1/0=\infty$
implies  (\ref{1/rho}).
\end{proof}

Radius of regularity for interval parametric matrices was first
defined in \cite{Kolev14} under the assumption that
$r^*(A(p),\inum{p}) <\infty$. In several works, e.g.,
\cite{Kolev'CTA12}, \cite{Kolev14}, L. Kolev develops and applies a
methodology for either finding the regularity radius of an interval
parametric matrix or providing bounds for the latter. Beside various
other requirements, this methodology also assumes
$r^*(A(p),\inum{p}) <\infty$. We are not informed about any other
work discussing a methodology for checking the condition
$r^*(A(p),\inum{p}) <\infty$, respectively the condition
$r^*(A(p),\inum{p})=\infty$.

In view of Definition \ref{radReg}, the regularity radius can
be defined equivalently as
\begin{multline}\label{defRegDet}
r^*(A(p),\inum{p}) := \\ \inf\{r\geq 0 \mid
\det\left(A(\check{p}+rp)\right)=0  \text{ for some }  p\in\R^K,
p\in [-\hat{p}, \hat{p}]\}.
\end{multline}
According to (\ref{defRegDet}), $r^*(A(p),\inum{p}) = \infty$ if
$\det\left(A(\check{p}+rp)\right)=0 $ does not have real solutions
for any $r\in\R$, $r\geq 0$, $p\in\R^K$, $p\in [-\hat{p}, \hat{p}]$.

Applying Theorem \ref{1/rhoThm} and Theorem \ref{nscGen}, we obtain
the following equivalent conditions for an infinite radius of
regularity.

\begin{corollary}\label{cor1}
An interval parametric matrix $\left\{A(p), p\in\inum{p}\right\}$
involving affine-linear dependencies, has infinite radius of
regularity $r^*$ if and only if
\begin{itemize}
\item[(i)] the constrained equation
$$\det\left(\check{A}- r\left(p_kA_k +\sum_{i=1,i\neq k}^K
\varepsilon^{(k)}_i\hat{p}_iA_i\right)\right)=0, \qquad p_k\in
[-\hat{p}_k, \hat{p}_k], r\geq 0,
$$
does not have real solutions for each $k\in{\cal K}$,
$\varepsilon^{(k)}\in Q_{K-1}$,

\item[(ii)] equivalently, nonsingular $\check{A}=A(\check{p})$ and
$$
\rho_0\left(B(p_k, \varepsilon^{(k)})\right)=0, \qquad B(p_k,
\varepsilon^{(k)}):= p_k\check{A}^{-1}A_k + \sum_{i=1, i\neq k}^K
\varepsilon^{(k)}_i\hat{p}_i\check{A}^{-1}A_i,
$$
for each $k\in{\cal K}$, $p_k\in [-\hat{p}_k, \hat{p}_k]$,
$\varepsilon^{(k)}\in Q_{K-1}$.
\end{itemize}
\end{corollary}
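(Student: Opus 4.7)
My plan is to derive both (i) and (ii) as consequences of Theorem~\ref{1/rhoThm} combined with the determinant reformulation~(\ref{defRegDet}) of $r^\ast$ and condition (v) of Theorem~\ref{nscGen}. The preliminary step is a dichotomy on $\check{A}$: if $A(\check{p})$ is singular, then $r=0$, $p=0$ already yield $\det(A(\check{p}+rp))=0$, so $r^\ast=0<\infty$, while (i) fails with the witness $r=0$ and (ii) fails because it builds in nonsingularity of $\check{A}$. Hence each of $r^\ast=\infty$, (i), and (ii) forces $\check{A}$ nonsingular, and I will treat this as a standing assumption for the two remaining equivalences.

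For $r^\ast=\infty\Leftrightarrow$(ii), Theorem~\ref{1/rhoThm} gives $r^\ast=1/M$, where $M$ is the maximum, over the finite set $\{(k,\varepsilon^{(k)})\}$ and the compact interval $[-\hat{p}_k,\hat{p}_k]$, of the nonnegative quantities $\varrho_0(B(p_k,\varepsilon^{(k)}))$. Because $\varrho_0\ge 0$ and the convention $1/0=\infty$ is already in force, $r^\ast=\infty$ is equivalent to $M=0$, which is equivalent to every $\varrho_0(B(p_k,\varepsilon^{(k)}))$ vanishing; this is exactly (ii).

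For $r^\ast=\infty\Leftrightarrow$(i), the key identity is $A(\check{p}+rp)=A(\check{p})+r\sum_{k=1}^K p_kA_k$, so for each fixed $r\ge 0$ the family $\{A(\check{p}+rp):p\in[-\hat{p},\hat{p}]\}$ is an affine-linearly parametrized interval matrix with center $\check{A}$ and radius vector $r\hat{p}$. Applying Theorem~\ref{nscGen}-(v) to this scaled family and invoking~(\ref{defRegDet}), I get that $r^\ast=\infty$ holds iff
\[\det\!\left(\check{A}+rq_kA_k+r\sum_{i\ne k}\varepsilon_i\hat{p}_iA_i\right)\ne 0\]
for every $r\ge 0$, $k\in{\cal K}$, $\varepsilon\in Q_{K-1}$, and $q_k\in[-\hat{p}_k,\hat{p}_k]$. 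The sign mismatch between this display and the form of (i) is absorbed by the symmetries $q_k\mapsto-q_k$ and $\varepsilon\mapsto-\varepsilon$, both of which leave the index sets $[-\hat{p}_k,\hat{p}_k]$ and $Q_{K-1}$ invariant.

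The only genuine subtlety is ensuring that the extremum $M$ in Theorem~\ref{1/rhoThm} is attained as a maximum rather than merely approached as a supremum, so that $M=0$ truly forces each $\varrho_0(B(p_k,\varepsilon^{(k)}))$ to equal $0$; this is handled by compactness of $[-\hat{p}_k,\hat{p}_k]$ together with finiteness of ${\cal K}\times Q_{K-1}$. Everything else is bookkeeping: unwinding the definition of $r^\ast$ via determinants and matching sign conventions.
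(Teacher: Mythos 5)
Your proposal is correct and follows essentially the same route as the paper, which derives the corollary precisely by combining Theorem~\ref{1/rhoThm} (with the convention $1/0=\infty$) for condition~(ii) and the determinant form~(\ref{defRegDet}) together with Theorem~\ref{nscGen}-(v) applied to the scaled family $A(\check{p}+rp)$ for condition~(i); the paper simply states this application without writing out the details you supply (the dichotomy on $\check{A}$, the sign symmetry $p_k\mapsto -p_k$, $\varepsilon\mapsto-\varepsilon$). No gaps.
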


Checking Corollary \ref{cor1}-(i), the parametric determinants
involve two parameters compared to the procedure (\ref{numDet}),
respectively Algorithm \ref{algo}. Checking Corollary
\ref{cor1}-(ii), however, applies the same procedure (\ref{numRho})
with $|\lambda|\geq 0$.

Checking any condition of Corollary \ref{cor1}, and finding all real
roots of the corresponding determinant, whenever they exist, by
 Theorem \ref{1/rhoThm} we either determine the finite
regularity radius of an interval parametric matrix, or prove that
the regularity radius is infinite.

\begin{example} Determining the radius of regularity for
the interval parametric matrices from Examples \ref{1param},
\ref{Hudak} and \ref{redNumb}, we obtain with $6$ digits precision:
\begin{eqnarray*}
r^{*}(A(p), p\in\inum{p}, {\rm Example \ \ref{1param}}) &=& 1.08209 ,\\
r^{*}(A(p), p\in\inum{p}, {\rm Example \ \ref{Hudak}}) &=&  1.03289,\\
r^{*}(A(p), p\in\inum{p}, {\rm Example \ \ref{redNumb}}) &=&
0.996413.
\end{eqnarray*}
\end{example}

\section{Conclusion}

By Poljak and Rohn \cite{PolyakRohn}, see also \cite{KreiLakRohnKa},
checking regularity is an NP-hard problem for the nonparametric
interval matrices (note that a $n\times n$ nonparametric interval
matrix can be considered as an interval parametric  matrix involving
$n^2$ parameters). That is why, some polynomially computable
sufficient conditions are used for verifying regularity.  However,
the sufficient regularity conditions may approximate quite rough a
matrix regularity and, depending on the width of the parameter
intervals, may fail proving regularity.

All of the necessary and sufficient regularity conditions, presented
in Theorem \ref{nscGen},  exhibit exponential behavior. They employ
a finite set of test matrices which cardinality is $K2^{K-1}$, where
$K$ is the number of non-degenerate interval parameters. This makes
the proposed methodology efficient for large matrices provided the
number of interval parameters is small.

The methodology, presented here, for proving regularity/singularity
of an interval parametric matrix is particularly useful in case of
large parameter intervals for which all sufficient regularity
conditions fail. This is just opposite to the requirement for narrow
intervals of the most interval methods in order to provide a good
quality of the result or for success. Since in case of large
parameter intervals, the interval matrix is very close to a singular
one, the proposed methodology may come very quickly to a singular
matrix. Thus, the proposed methodology has exponential complexity
only in case of regular interval parametric matrices, and could be
qualified as not a priori exponential.

A key feature of the discussed methodology is that the original
problem involving $K$ interval parameters is transformed to
$K2^{K-1}$ regularity problems involving only one interval
parameter. On one side, as discussed in Example \ref{1param}, this
leads to an increased applicability of the easy verifiable
sufficient regularity conditions. On the other hand, the one
parameter problems, whose solving is required by the necessary and
sufficient regularity conditions, are polynomially solvable. Most of
the problems are solvable exactly in exact arithmetic. The most
easily implementable criterion is Theorem \ref{nscGen}-(v).
Furthermore, the $K2^{K-1}$ one parameter problems are independent
of each other and, therefore, can be checked on parallel processors.
The latter increases the applicability of the proposed necessary and
sufficient regularity conditions.

As already mentioned, proving regularity of an interval parametric
matrix implies various other properties of these matrices.

\section*{Acknowledgements}
This work is supported by the Grant No BG05M2OP001-1.001-0003,
financed by the Bulgarian Operational Programme ``Science and
Education for Smart Growth''  (2014-2020) and co-financed by the
European Union through the European structural and investment funds,
as well as,  by the National Scientific Program ``Information and
Communication Technologies for  a  Single Digital Market  in
Science, Education and  Security (ICTinSES)'', contract No
DO1-205/23.11.2018, financed by the Ministry of Education and
Science in Bulgaria.


\end{document}